\title{Model structures on the category of complexes of quiver representations}
\author{Payam Bahiraei}
\date{27 February 2021}
\newcommand{\rt}{\rightarrow}
\newcommand{\lrt}{\longrightarrow}
\newcommand{\CA}{\mathcal{A} }
\newcommand{\CC}{\mathcal{C} }
\newcommand{\CE}{\mathcal{E}}
\newcommand{\CF}{\mathcal{F} }
\newcommand{\CQ}{\mathcal{Q} }
\newcommand{\CX}{\mathcal{X} }
\newcommand{\CY}{\mathcal{Y} }
\newcommand{\Mod}{{\rm{Mod\mbox{-}}}}
\newcommand{\mmod}{{\rm{{mod\mbox{-}}}}}
\newcommand{\Prj}{{\rm{Prj}\mbox{-}}}
\newcommand{\QR}{{{\rm Rep}{(\CQ, R)}}}
\newcommand{\ac}{{\rm{ac}}}
\newcommand{\Coker}{{\rm{Coker}}}
\newcommand{\Ker}{{\rm{Ker}}}
\newcommand{\Ext}{{\rm{Ext}}}
\newcommand{\Ho}{{\rm{Ho}}}
\newtheorem{theorem}{Theorem}[section]
\newtheorem{corollary}[theorem]{Corollary}
\newtheorem{lemma}[theorem]{Lemma}
\newtheorem{proposition}[theorem]{Proposition}
\newtheorem{definition}[theorem]{Definition}
\newtheorem{example}[theorem]{Example}
\newtheorem{notation}[theorem]{Notation}
\newtheorem{remark}[theorem]{Remark}
\newtheorem{proof}{Proof}
\begin{document}

\maketitle
\begin{abstract}
 In this paper, we study the category $C(Rep(\mathcal{Q}, \mathcal{A}))$ of complexes of representations of quiver $\mathcal{Q}$ with values in an abelian category $\mathcal{A}$.  We develop a method
for constructing some model structures on $C(Rep(\mathcal{Q}, \mathcal{A}))$ based on componentwise notion. Moreover we also show that these model structures
are monoidal. As an application of these model structures we introduce some descriptions of the derived category of complexes of representations of $\mathcal{Q}$ in $\Mod R$. 
\end{abstract}

\section{Introduction}\label{sec1}

The notion of cotorsion pairs (or cotorsion theory) was invented by \cite{Sal79} in the category of abelian groups and was rediscovered by Enochs and coauthors in the 1990's. In short, a cotorsion pair in an abelian category $\mathcal{A}$ is a pair $(\CF,\CC)$ of classes of object of $\mathcal{A}$ each of which is the orthogonal complement of the other with respect to the $\Ext$ functor. In recent years we have seen that the study of cotorsion pairs is especially relevant to study of covers and envelops, particularly in the proof of the flat cover conjecture \cite{BBE}.

There is another usage of cotorsion pairs in abelian model structures introduced by Hovey in \cite{Hov02}. Hovey noticed that a Quillen model structure on any abelian category $\mathcal{A}$ is equivalent to two complete cotorsion pairs in $\mathcal{A}$ which are compatible in a precise way. These cotorsion pairs are called Hovey pair.  In \cite{Gil04}, Gillespie began the study of when a cotorsion pair in abelian category $\mathcal{A}$, induces two compatible cotorsion pairs in $C(\mathcal{A})$, the category of unbounded complexes of $\mathcal{A}$. He applied
Hovey's approach to define new and interesting abelian model structure on $C(R)$, which is monoidal in the sense of \cite{Hov99} where $R$ is an associative unitary commutative ring. This approach was also followed in \cite{Gil06, Gil08, CEG, EER08, EAPT, EEI} in order to find new classes which give rise to new abelian model structure in certain abelian categories of unbounded complexes.

The representation theory of quivers is probably one of the most fruitful parts of
modern representation theory. By now, a number of remarkable connections to other
algebraic topics have been discovered, in particular to Lie algebras, Hall
algebras and quantum groups and more recently to cluster algebras.
Let $Rep(\mathcal{Q}, \mathcal{A})$ be the category of $\mathcal{A}$-valued representations of quiver $\mathcal{Q}$, where $\mathcal{A}$ is an abelian category. There is an interesting question in \cite{HJ1} 'How homological properties in $\mathcal{A}$ carry over to $Rep(\mathcal{Q}, \mathcal{A})$?' In series of papers Enochs, et al presented descriptions for projective, injective and flat object of $Rep(\mathcal{Q}, \mathcal{A})$ with respect to their local properties. See \cite{EE, EER09, EOT}. In \cite{EHHS}, Eshraghi, et al. studied the cotorsion pair in $Rep(\mathcal{Q},R)$. They showed that in a certain conditions, a complete cotorsion pair in $\Mod R$ can be given a complete cotorsion pair in $\QR$ and vice versa. Recently in \cite{HJ1} Holm and J\o rgensen extend this result about module-valued quiver representations to general $\mathcal{M}$-valued representations where $\mathcal{M}$ is an abelian category. Now let $\mathcal{Q}$ be an acyclic finite quiver and set $\mathcal{M}=C(\mathcal{A})$, the category of complexes of $\mathcal{A}$, then we have such cotorsion pairs in the category of $Rep(\mathcal{Q},C(\mathcal{A}))$, since $C(Rep(\mathcal{Q},\mathcal{A}))=Rep(\mathcal{Q},C(\mathcal{A}))$. We follow these results and focus on the study of Hovey pair in the category of complexes of representations of quivers. So we start by a Hovey pair in $C(\mathcal{A})$ and induce two Hovey pairs in $C(Rep(\mathcal{Q}, \mathcal{A}))$, see Theorem \ref{prop 32}.

Therefore based on well-known model structures on $C(\mathcal{A})$ we construct new model structures on $C(Rep(\mathcal{Q}, \mathcal{A}))$. Moreover we also show that these model structures are monoidal when the model structures are monoidal in $C(\mathcal{A})$. These model structures are more related with a componentwise notion, so we call them componetwise model structures. In this case, the derived category, $D(Rep(\mathcal{Q}, \mathcal{A}))$, occurs as its homotopy category.

In this work,  we introduce two descriptions for $D(\QR)$ the derived category of complexes of representations of quivers by $R$-modules (usually abbreviated to $D(\mathcal{Q})$) in terms of componentwise notion. In fact we characterize the homotopy relation of componentwise projective model structure and show that if $\mathcal{Q}$ is an acyclic finite quiver then we have the following equivalence
\begin{equation}
\label{eq1}
D(\mathcal{Q})\cong K({dgPrj}^{op}\mbox{-}\mathcal{Q})
\end{equation}
where ${dgPrj}^{op}\mbox{-}\CQ$  is defined in section \ref{section 4}. This equivalence is obtained under the canonical functor $K(\mathcal{Q})\longrightarrow D(\mathcal{Q})$. Hence we introduce a subcategory, differ from subcategory of DG-projective complexes of $K(\mathcal{Q})$ such that equivalent to $D(\mathcal{Q})$ under the canonical functor $K(\mathcal{Q})\longrightarrow D(\mathcal{Q})$. We also establish another description for $D(\CQ)$ as a quotient of homotopy category of representations of projective complexes based on componentwise notion, see corollary \ref{corollary 5}.

The organization of this paper is as follows. In section \ref{preliminaries}, we recall some generalities on model structures and facts needed. Section \ref{section 3} we focus  on componentwise model structures on $C(Rep(\CQ,\mathcal{A}))$. As an application of these model structures we introduce two descriptions for $D(\QR)$ the derived category of complexes of representations of quivers by $R$-modules
in section \ref{section 4}

\section{Preliminaries}
\label{preliminaries}
\subsection{The homotopy category of complexes}
Let $\mathcal{A}$ be an additive category and $C(\mathcal{A})$ denote the category of complexes over $\mathcal{A}$.
Morphisms $f, g : X^\bullet \rightarrow Y^\bullet$ in the category $C(\mathcal{A})$
of complexes are called homotopic, denoted by $f \sim g$, if there exists a family
$(s^n)_{n\in Z}$ of morphisms $s^n : X^n \rightarrow Y^{n-1}$ in $\mathcal{A}$, satisfying $f^n - g^n = d_Y^{
n-1}s^n +
s^{n+1}d_X^
n$ for all $n\in Z$. It is easy to check that $\sim$ is an equivalence relation. The classical homotopy category of $\mathcal{A}$, denoted by $K(\mathcal{A})$, has the same objects as $C(\mathcal{A})$ but morphisms are the homotopy classes of morphisms of complexes.

There is also another interpretation of the homotopy category of complexes. In fact in view of \cite{Hap88}, $K(\mathcal{A})$ is the stable category of Frobenius category $(C(\mathcal{A}),\mathcal{S})$, where
 $\mathcal{S}$ is the set of all exact sequences
$0^\bullet \rightarrow  X^\bullet \rightarrow Y^\bullet \rightarrow  Z^\bullet \rightarrow  0^\bullet$ in $\mathcal{A}$ such that the exact sequences $0 \rightarrow  X^i \rightarrow Y^i \rightarrow  Z^i \rightarrow  0^i$
are split exact. In this case, we can see that if $f, g : X^\bullet \rightarrow Y^\bullet$ are two morphisms in $C(\mathcal{A})$ then  $f \sim g$ if and only if $f-g$ factors over an $\mathcal{S}$-injective object $I(X^\bullet)$, where $I(X^\bullet)$ is the complex $\oplus_{i\in Z} \overline{X^i}[i]$. Here $\overline{X}$ is the complex $X^0=X^{-1}=X$ with the identity map and zero elsewhere.

If $R$ is an associative ring with identity and set $\mathcal{A}=\Mod R$, the category of left $R$-modules, then for any $R$-module $M$, the complex $\overline{M}$ is an 
$\mathcal{S}$-projective and $\mathcal{S}$-injective object of an exact category $C(\Mod R)$ (usually abbreviated to $C(R)$) with set of exact sequences $\mathcal{S}$ as above. In general,
for every projective module $P$, the complex
$$ \cdots \rightarrow 0 \rightarrow P \rightarrow P \rightarrow 0 \rightarrow \cdots $$
is projective. We can also say that any projective complex can be written uniquely as coproduct of such complexes. Dually, if $I$ is an injective module, the complex
$$ \cdots \rightarrow 0 \rightarrow I \rightarrow I \rightarrow 0 \rightarrow \cdots $$
is injective. Furthermore, up to isomorphism, any injective complex is a direct product of such complexes. Note that this direct product is in fact a direct sum.

\subsection{The category of representation of quiver:}
Let $\mathcal{Q}$ be a quiver (a directed graph). The sets of vertices and arrows are denoted by $V(\mathcal{Q})$ and $E(\mathcal{Q})$ respectively and are usually abbreviated to $V$ and $E$. An arrow of a quiver from a vertex $v_1$ to a vertex $v_2$ is denoted by $a:v_1\rightarrow v_2$. In this case we write $s(a)=v_1$ the initial (source) vertex and $t(a)=v_2$ the terminal (target) vertex. A path $p$ of a quiver $\mathcal{Q}$ is a sequence of arrows $a_n \cdots a_2 a_1$ with $t(a_i)=s(a_{i+1})$. 

\vspace{1mm}
A quiver $\mathcal{Q}$ is said to be finite if $V$ and $E$ are finite sets. A path of length $l\geq 1$ is called cycle whenever its source and target coincide. A quiver is called acyclic if it contains no cycles.

\vspace{1mm}

Let $\mathcal{A}$ be an abelian category. 
A representation $\mathcal{X}$ by objects of $\mathcal{A}$ of a given quiver $\mathcal{Q}$ is a covariant functor $\mathcal{X}:\CQ \longrightarrow \mathcal{A}$, so a representation is determined by giving object $\mathcal{X}_v\in\mathcal{A}$ to each vertex $v$ of $\mathcal{Q}$ and a morphism $\mathcal{X}(a):\mathcal{X}_v\rightarrow \mathcal{X}_w$ in $\mathcal{A}$ to each arrow $a:v\rightarrow w$ of $\mathcal{Q}$. A morphism $\varphi$ between two representations $\CX,\CY$ is just a natural transformation between $\CX,\CY$ as a functor. Indeed, $\varphi$ is a family $(\varphi_v)_{v\in V}$ of maps $(f_v:\CX_v \lrt \CY_v)_{v\in V}$ such that for each arrow $a:v \lrt w$, we have $\CY(a)\varphi_v = \varphi_w \CX(a)$ or, equivalently, the following square is commutative:
$$\xymatrix{ \CX_v \ar[r]^{\CX(a)} \ar[d]^{\varphi_v} & \CX_w \ar[d]^{\varphi_w} \\
\CY_v \ar[r]^{\CY(a)} & \CY_w }$$

We denoted by $Rep(\mathcal{Q},\mathcal{A})$ the category of all representations of $\mathcal{Q}$ by objects of $\mathcal{A}$. It can be seen that this category is an abelian category. If $R$ is an associative ring with identity we write $text{Rep}(\CQ,R)$ (resp. $text{rep}(\CQ,R)$) instead of $Rep(\mathcal{Q}, \Mod R)$ (resp. $Rep(\CQ, \mmod R)$).
 It is known that the category $text{Rep}(\CQ,R)$ is equivalent to the category of modules over the path algebra $R\CQ$, whenever $\mathcal{Q}$ is a finite quiver.

For any vertex $v\in V$ of quiver $\CQ=(V,E)$, let $e^v_{\mathcal{A}}: Rep(\mathcal{Q},\mathcal{A}) \lrt \mathcal{A}$ be the evaluation functor defined by $e^v_{\mathcal{A}}(\CX)=\CX_v$, for any $\CX\in Rep(\mathcal{Q},\mathcal{A})$. It is proved in \cite{EH} that $e^v_{\mathcal{A}}$ has a right adjoint $e^v_{\rho,\mathcal{A}}:\mathcal{A} \lrt Rep(\mathcal{Q},\mathcal{A})$ given by $e^v_{\rho,\mathcal{A}}(M)_w=\prod_{Q(w,v)}M$ for the object $M\in \mathcal{A}$, where $Q(w,v)$ denotes the set of paths starting in $w$ and terminating in $v$, and by the natural projection $e^v_{\rho,\mathcal{A}}(M)_a:\prod_{Q(w_1,v)}M \lrt \prod_{Q(w_2,v)}M$ for an arrow $a:w_1 \rightarrow w_2$. Moreover, it is shown that $e^v_{\mathcal{A}}$ admits also a left adjoint $e^v_{\lambda,\mathcal{A}}$, defined by $e^v_{\lambda,\mathcal{A}}(M)_w=\bigoplus_{Q(v,w)}M$.

\subsection{Model structures and Hovey pairs.}
Model categories were first introduced by Quillen \cite{Qui67}. Let $\mathcal{C}$ be a category. A model structure on $\mathcal{C}$ is a triple $(Cof, W, Fib)$
of classes of morphisms, called cofibrations, weak equivalences and fibrations, respectively,
such that satisfying certain axioms. The definition then was modified
by some authors. The one that is commonly used nowadays is due to Hovey \cite{Hov02}. Hovey
discovered that the existence of a model structure on any abelian category $\mathcal{A}$ is equivalent to
the existence of two complete cotorsion pairs in $\mathcal{A}$ which are compatible in a precise way.

Recall that a pair $(\mathcal{F}, \mathcal{C})$ of classes of objects of $\mathcal{A}$ is said to be a cotorsion pair if $\mathcal{F}^\perp=\mathcal{C}$ and $\mathcal{F} = {}^\perp\mathcal{C}$, where the left and right orthogonals are defined as follows 
\[{}^\perp\mathcal{C}:=\{A \in \mathcal{A}\ | \ \Ext^1_{\mathcal{A}}(A,Y)=0, \ {\rm{for \ all}} \ Y \in \mathcal{C} \}\]
and
\[\mathcal{F}^\perp:=\{A \in \mathcal{A}\ | \ \Ext^1_{\mathcal{A}}(W,A)=0, \ {\rm{for \ all}} \ W \in \mathcal{F} \}.\] 
A cotorsion pair  $(\mathcal{F}, \mathcal{C})$ is called complete if for every $A \in \mathcal{A}$ there exist exact sequences
\[0 \rt Y \rt W \rt A \rt 0 \ \ \ {\rm and} \ \ \ 0 \rt A  \rt Y' \rt W' \rt 0,\]
where $W, W'\in \mathcal{F}$ and $Y, Y' \in  \mathcal{C}$.

The advantage of the Hovey's theorem is that we can construct a model structure on abelian category $\mathcal{A}$ determined by three classes of objects, called cofibrant, trivial and fibrant objects. 
If $\mathcal{A}$ is a model category, then it has an initial object $\emptyset$ and terminal object $*$. An object $W\in \mathcal{A}$ is said to be a trivial object if $\emptyset\rightarrow W$ is a weak equivalence.
An object $A\in \mathcal{A}$ is said to be a cofibrant (resp. trivially cofibrant) if $\emptyset\rightarrow A$ is a cofibration (resp. trivially cofibration). Dually $B\in \mathcal{A}$ is fibrant (resp. trivially fibrant) if $B\rightarrow *$ is fibration (resp. trivial fibration). If $\CC, \CF$ and $\mathcal{W}$ are the classes of cofibrant, fibrant and weak equivalence objects in abelian model category $\mathcal{A}$, then $\mathcal{W}$ is a thick subcategory of $\mathcal{A}$ and both $(\mathcal{C},\mathcal{W}\cap\mathcal{F})$ and  $(\mathcal{C}\cap\mathcal{W},\mathcal{F})$ are complete cotorsion pairs in $\mathcal{A}$. Conversely, for classes of objects $\CC, \mathcal{W}$ and $\CF$, where $\mathcal{W}$ is thick, any two
complete cotorsion pairs of the above form give rise to an abelian model structure on $\mathcal{A}$, see \cite[Theorem 2.2]{Hov02}. These two cotorsion pairs have been referred to as Hovey pair. We also call $(\mathcal{C},\mathcal{W},\mathcal{F})$ a Hovey triple. 
 
\vspace{1mm}
We refer the reader to \cite{DS95} for a readable introduction to model categories and to \cite{Hov99}
for a more in-depth presentation. 

\subsection{Homotopy category of model category:}
Model categories are used to give an effective construction of the localization of categories, where the problem is to convert the class of weak-equivalence into isomorphisms. Suppose $\mathcal{C}$ is a category with a subcategory of $\mathcal{W}$. The localized category that denoted by $\mathcal{C}[\mathcal{W}^{-1}]$ is defined in classical algebra. In case $\mathcal{C}$ is a model category with weak equivalence $\mathcal{W}$, define $\mathcal{C}[\mathcal{W}^{-1}]$ as the \textit{Homotopy category} associated to $\mathcal{C}$ and denote by $\Ho \mathcal{C}$. Our reason for not adopting the right notation is that in this case, we have an identity between the morphisms of localized category and homotopy class of morphisms under a certain \textit{homotopy relation} which is determined by the model structure. The abstract notion of homotopy relation can be found in any references on model category such as \cite{Hov99}, but  whenever $\mathcal{A}$ is an abelian model category we can determine a homotopy relation by the following lemma:
\begin{lemma}
\label{lemma 27}
Let $\mathcal{A}$ be an abelian model category and $f,g:X\rightarrow Y$ be two morphisms. If $X$ is cofibrant and $Y$ is fibrant, then $f$ and $g$ are homotopic (we denote by $f\sim g)$ if and only if $f-g$ factor through a trivially fibrant and cofibrant object.
\end{lemma}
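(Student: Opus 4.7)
The plan is to reduce to showing that $h := f - g : X \to Y$ is null-homotopic if and only if it factors through an object in $\mathcal{C} \cap \mathcal{W} \cap \mathcal{F}$, using the additive structure of $\mathfrak{A}$ together with the fact that the homotopy relation is a congruence. Both directions are then treated by exploiting the two complete cotorsion pairs supplied by Theorem \ref{Theorem 12}, namely $(\mathcal{C}, \mathcal{W} \cap \mathcal{F})$ and $(\mathcal{C} \cap \mathcal{W}, \mathcal{F})$.

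For the ``if'' direction, I would suppose $h = \beta \alpha$ with $W \in \mathcal{C} \cap \mathcal{W} \cap \mathcal{F}$. Since $W \in \mathcal{W}$, the map $0 \to W$ is a weak equivalence, so $W$ is zero in $\Ho(\mathfrak{A})$ and any morphism factoring through $W$ vanishes there. Because $X$ is cofibrant and $Y$ fibrant, the canonical map $\Hom_{\mathfrak{A}}(X,Y) \to \Hom_{\Ho(\mathfrak{A})}(X,Y)$ is the quotient by the null-homotopic morphisms (Quillen's theorem), so $h = 0$ in $\Ho(\mathfrak{A})$ forces $h \sim 0$, whence $f \sim g$.

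For the ``only if'' direction, I would use completeness of $(\mathcal{C} \cap \mathcal{W},\, \mathcal{F})$ to factor the diagonal $\Delta_Y: Y \to Y \oplus Y$ as $Y \xrightarrow{s} P(Y) \xrightarrow{(p_0,p_1)} Y \oplus Y$, with $s$ a trivial cofibration and $(p_0,p_1)$ a fibration; this gives $W' := \Coker(s) \in \mathcal{C} \cap \mathcal{W}$. A right homotopy $H: X \to P(Y)$ witnessing $h \sim 0$ yields $h = (p_0 - p_1)H$, and since $(p_0 - p_1)s = 0$ the morphism $p_0 - p_1$ descends to some $\bar\beta: W' \to Y$, producing a factorization $h = \bar\beta \circ (\pi H)$ through $W'$, where $\pi: P(Y) \twoheadrightarrow W'$ is the projection.

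The remaining step, and the main obstacle, is to upgrade $W' \in \mathcal{C} \cap \mathcal{W}$ to an object of $\mathcal{C} \cap \mathcal{W} \cap \mathcal{F}$ without destroying the factorization. I would factor $W' \to 0$ as a trivial cofibration $W' \hookrightarrow \widetilde W$ followed by a fibration, so $\widetilde W \in \mathcal{F}$; since $\mathcal{C} \cap \mathcal{W}$ is closed under extensions (which follows from the thickness of $\mathcal{W}$ together with the cotorsion pair axioms), we also get $\widetilde W \in \mathcal{C} \cap \mathcal{W}$. As $\widetilde W / W' \in \mathcal{C} \cap \mathcal{W}$ and $Y \in \mathcal{F}$, the $\Ext^1$-vanishing from the cotorsion pair $(\mathcal{C} \cap \mathcal{W},\, \mathcal{F})$ yields an extension $\tilde\beta: \widetilde W \to Y$ of $\bar\beta$, which supplies the desired factorization of $h$ through $\widetilde W \in \mathcal{C} \cap \mathcal{W} \cap \mathcal{F}$. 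This final upgrade is where both cotorsion pairs of the Hovey triple are genuinely used in combination.
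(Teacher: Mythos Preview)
Your argument is correct. The reduction to $h=f-g$ is valid because for $X$ cofibrant and $Y$ fibrant the right-homotopy relation is additive (if $H$ witnesses $f\sim g$ via a path object $P(Y)$ with section $s$, then $H-sg$ witnesses $f-g\sim 0$), and both directions are handled cleanly: the ``if'' direction via the identification $\Hom_{\mathfrak{A}}(X,Y)/\!\sim\;\cong\Hom_{\Ho(\mathfrak{A})}(X,Y)$ together with the fact that trivial objects vanish in $\Ho(\mathfrak{A})$; the ``only if'' direction via the very good path object, the factorization of $p_0-p_1$ through $W'=\Coker(s)\in\mathcal{C}\cap\mathcal{W}$, and the subsequent fibrant upgrade using $\Ext^1(\widetilde W/W',Y)=0$ from the cotorsion pair $(\mathcal{C}\cap\mathcal{W},\mathcal{F})$. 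Each step checks out.

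As for comparison with the paper: there is essentially nothing to compare. The paper does not supply its own argument for this lemma but simply cites \cite[Proposition~4.4]{Gil11}. Your write-up is a self-contained reconstruction of that result; in particular, your ``only if'' direction (path object, factor $p_0-p_1$ through the cokernel of the section, then push into a fibrant trivially cofibrant object using the $\Ext^1$-orthogonality) is exactly the kind of argument one finds in Gillespie's treatment of homotopy in exact/abelian model categories. The only stylistic remark is that your ``if'' direction invokes the full homotopy-category machinery (Theorem~\ref{theorem 42}), whereas one can also argue more directly by lifting $\alpha:X\to W$ against the trivial fibration $p_1:P(Y)\to Y$ (using that $p_1$ is a trivial fibration when $Y$ is fibrant and $P(Y)$ is a very good path object) to produce an explicit right homotopy; but your route is perfectly valid and arguably cleaner.
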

\begin{proof}
We refer to \cite[Proposition 4.4]{Gil11}.
\end{proof}

\section{Componentwise monoidal model structures}
\label{section 3}
Let $\mathcal{Q}$ be a quiver and $\mathcal{A}$ be an abelian category. In this section we develop a method for constructing some model structures on $C(Rep(\mathcal{Q},\mathcal{A}))$ where $C(Rep(\mathcal{Q},\mathcal{A}))$ is a category of all complex $\mathcal{X}^\bullet=(\mathcal{X}^i,d^i)$ which is $\mathcal{X}^i\in Rep(\mathcal{Q},\mathcal{A})$. We will use Hovey Theorem relating cotorsion pair to construct these model structures. 

Throughout this paper we use the following setting and notation:
\begin{notation}
\label{Not}
Let $\CQ=(V,E)$ be a quiver and $\mathcal{A}$ be an abelian category. Let $\CF$ be a
class of objects of $\mathcal{A}$ and contains the zero object of $\mathcal{A}$.
\begin{itemize}
\item[(a)] By $(\CQ,\mathcal{F})$ we mean the class of all representations $\mathcal{X}\in Rep(\mathcal{Q},\mathcal{A})$ such that $\mathcal{X}_v$ belongs to $\mathcal{F}$ for each vertex $v\in V$.
\item[(b)] By $C(\CQ,\mathcal{F})$ we mean the class of all complexes $\mathcal{X}^\bullet\in C(Rep(\mathcal{Q},\mathcal{A}))$ such that $\mathcal{X}^i$ belongs to $(\CQ,\mathcal{F})$ for each $i\in Z$.
\end{itemize}

\end{notation}
\begin{example}
\label{example}
\begin{itemize}
\item[(1)]
Let $\CQ=(V,E)$ be a quiver and $R$ be an associative ring with identity. Suppose that $C(R)$ denote the category of complexes over $R$ and $\CF=\Prj R$ is the class of projective $R$-module. By notation \ref{Not} $(\CQ,\Prj R)$ is equal to all representations $\CX$ in $\QR$ such that for each $v\in V$, $\CX_v$ is a projective $R$-module. So $C(\CQ,\Prj R)$ is the class of all complexes  in $C(\QR)$ such that each degree belongs to $(\CQ,\Prj R)$. If $\CX^\bullet \in C(\CQ,\Prj R)$, then $\CX^\bullet$ can be regarded as an object of $Rep(\mathcal{Q}, C(R))$ such that for each $v\in V$, $\CX_v^\bullet$ belongs to $C(\Prj R)$, where $C(\Prj R)$ is the class of all complexes of projective $R$-modules. Hence by notation \ref{Not}  we can say that $\CX^\bullet\in (\CQ, C(\Prj R))$, since $C(\Prj R)$ is a class of $C(R)$. Conversely, it is clear to see that every object of $(\CQ, C(\Prj R))$ can be regarded as an object of $C(\CQ,\Prj R)$,  hence these two categories have the same objects. 
\vspace{1mm}
\item[(2)]Let $\mathcal{Q}$ be the quiver $\xymatrix{ \bullet \ar[r] \ar@{.>}[r] & \bullet }$ and $\mathcal{F}= \Prj C(R)$. Consider the object $\mathcal{P}\in (\CQ,\Prj C(R))$ given as follows
$$\xymatrix{\cdots \ar[r] & 0 \ar[r] \ar[d] & Q \ar[r]^{1_Q} \ar[d]  & Q \ar[r] \ar[d]^{0}& 0 \ar[r] \ar[d] & 0 \ar[r] \ar[d] & \cdots \\ \cdots \ar[r] & 0 \ar[r] & 0 \ar[r] & P \ar[r]^{1_P} & P \ar[r] & 0 \ar[r] & \cdots  }$$ 
where $P$ and $Q$ are projective $R$-modules. 
Now if $\Prj C(\QR)$ is the class of all projective objects in $C(\QR)$, then $\mathcal{P}\notin \Prj C(\QR)$, since it is not a complex of projective representations. On the other hand $(\CQ,\Prj C(R))\subseteq (\CQ,C(\Prj R))$, since $\Prj C(R)\subseteq C(\Prj R)$. Hence by (1) we can say that $$\Prj C(\QR)\subset (\CQ,\Prj C(R))\subset (\CQ, C(\Prj R)).$$
\end{itemize}
\end{example}
\begin{remark}
As we see above, if $\CF$ is a class of objects of $\mathcal{A}$, then $(\CQ,C(\CF))$ and $C(\CQ,\CF)$ represent exactly the same class of $C(Rep(\mathcal{Q}, \mathcal{A}))$. In this paper we need both point of views. It is clear from the context that which one we have considered.
\end{remark}

In the following we need two lemmas. Carrying over the proof of \cite[Theorem A]{EHHS} and  \cite[Theorem 3.1]{EHHS} verbatim we have respectively:
\begin{lemma}
\label{lemma 31}
Let $\mathcal{Q}$ be an acyclic finite quiver and $\mathcal{A}$ be an abelian category which has
enough projective objects . Let $\CF$ be a class of objects of $\mathcal{A}$ and contains the zero object of $\mathcal{A}$. Then the pair $((\CQ,\CF),(\CQ,\CF)^\perp)$ (resp. $({}^\perp(\CQ,\CF),(\CQ,\CF))$) is a complete cotorsion pair if and only if the pair $(\mathcal{F},\mathcal{F}^\perp)$ (resp. $({}^\perp\mathcal{F},\mathcal{F})$) is so.
\end{lemma}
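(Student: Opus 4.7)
The plan is to transfer the cotorsion-pair structure between $\mathcal{G}$ and $\Rep(\CQ,\mathcal{G})$ using the evaluation functor $e^v_{\mathcal{G}}$ at each vertex $v\in V$ together with its left and right adjoints $e^v_{\lambda,\mathcal{G}}$ and $e^v_{\rho,\mathcal{G}}$ recalled in the preliminaries. Because $\mathcal{Q}$ is acyclic and finite, every path set $Q(v,w)$ between two vertices is finite, so both adjoints send objects of $\CF$ into $(\CQ,\CF)$ as soon as $\CF$ is closed under the relevant finite (co)products—a hypothesis available here because the class sits in a cotorsion pair. As the statement itself indicates, the argument is a transcription of \cite[Theorem~A]{EHHS} and \cite[Theorem~3.1]{EHHS} with the ambient $\Mod R$ replaced by a general Grothendieck category $\mathcal{G}$, so the adaptation is mainly a matter of checking that each step of \cite{EHHS} relies only on standard Grothendieck-category tools (exactness of colimits, existence of pullbacks/pushouts, the adjunction isomorphisms on $\Ext^1$).

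For the easy implication, assume $((\CQ,\CF),(\CQ,\CF)^{\perp})$ is a complete cotorsion pair. Given $M\in\mathcal{G}$, apply completeness to the representations $e^v_{\lambda,\mathcal{G}}(M)$ and $e^v_{\rho,\mathcal{G}}(M)$, then apply the exact functor $e^v_{\mathcal{G}}$ to the resulting short exact sequences, and read off the approximations of $M$ relative to $(\CF,\CF^{\perp})$ using the adjunction isomorphisms $\Ext^1_{\Rep(\CQ,\mathcal{G})}(e^v_{\lambda,\mathcal{G}}(F),\CY)\cong \Ext^1_{\mathcal{G}}(F,\CY_v)$ and $\Ext^1_{\Rep(\CQ,\mathcal{G})}(\CX,e^v_{\rho,\mathcal{G}}(C))\cong \Ext^1_{\mathcal{G}}(\CX_v,C)$.

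For the harder converse, assume $(\CF,\CF^{\perp})$ is complete and construct, for each $\CX\in\Rep(\CQ,\mathcal{G})$, the two required approximation sequences $0\to\CY\to\CW\to\CX\to0$ and $0\to\CX\to\CY'\to\CW'\to 0$ inductively along a topological ordering $v_1,\ldots,v_n$ of $V$ supplied by the acyclicity and finiteness of $\CQ$. At step $i$, a complete approximation of $\CX_{v_i}$ in $\mathcal{G}$ is chosen and lifted to the representation level by pushouts (respectively pullbacks) along the structural maps attached to arrows incident to $v_i$; the adjoints $e^{v_i}_{\lambda,\mathcal{G}}$ and $e^{v_i}_{\rho,\mathcal{G}}$ are used to package the local data as genuine $\CQ$-representations. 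The main obstacle will be the bookkeeping: after $n$ steps one must verify that $\CW_v\in\CF$ at every vertex while simultaneously $\CY\in(\CQ,\CF)^{\perp}$, using the adjunction-$\Ext^1$ formulas above and the closure properties of $\CF$ and $\CF^{\perp}$ under the extensions that arise. Acyclicity is essential because it ensures that the pushout/pullback at step $i$ only alters components at vertices strictly later than $v_i$ in the ordering, so the components already arranged to lie in $\CF$ remain undisturbed; finiteness guarantees the induction terminates. The second equivalence, concerning $({}^{\perp}\CF,\CF)$ and $({}^{\perp}(\CQ,\CF),(\CQ,\CF))$, then follows by the formally dual argument in which $e^v_{\rho,\mathcal{G}}$ takes over the role of $e^v_{\lambda,\mathcal{G}}$ and pullbacks replace pushouts.
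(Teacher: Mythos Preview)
Your proposal is correct and follows essentially the same approach as the paper: the paper's proof simply says to carry over \cite[Theorem~A]{EHHS} verbatim with $(\CQ,\CF)=\xi$ and $\CF=V_\xi$, and your outline (adjoint triples $(e^v_{\lambda,\mathcal{G}},e^v_{\mathcal{G}},e^v_{\rho,\mathcal{G}})$, the $\Ext^1$-adjunction formulas, and an induction along a topological ordering of the acyclic finite quiver) is precisely the content of that EHHS argument transported to a general Grothendieck category.
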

\begin{proof}
The proof is similar to the proof of Theorem \cite[Theorem A]{EHHS} by putting $(\CQ,\CF)=\xi$ and $\mathcal{F}=V_{\xi}$.
\end{proof}

\begin{lemma}
\label{lemma 32}
Let $\mathcal{Q}$ be an acyclic finite quiver and $\mathcal{A}$ be an abelian category which has
enough projective objects . Let $\CF$ be a class of objects of $\mathcal{A}$ and contains the zero object of $\mathcal{A}$. If $\mathcal{F}$ contain projective objects of $\mathcal{A}$, then $\mathcal{X}\in (\CQ,\CF)^\perp$ (resp. $\mathcal{X}\in {}^\perp(\CQ,\CF)$)
if and only if the following hold.
\begin{itemize}
 \item[$(i)$]  For any vertex $v$, $\mathcal{X}_v \in \mathcal{F}^\perp$ (resp. $\mathcal{X}_v \in {}^\perp \mathcal{F}$).
\item[$(ii)$]  For any vertex $v$, the map $\eta_{\CX,v}:\mathcal{X}_v \rightarrow \oplus_{s(a)=v}\mathcal{X}_{t(a)}$ (resp. $\xi_{\CX,v} : \oplus_{t(a)=v}\mathcal{X}_{s(a)}\rightarrow \mathcal{X}_v$) is an epimorphism (resp. a monomorphism) and $\Ker(\eta_{\CX,v})\in \mathcal{F}^\perp$ (resp. $\Coker(\xi_{\CX,v})\in {}^\perp \mathcal{F}$).
\end{itemize}
\end{lemma}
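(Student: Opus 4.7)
The statement has two parallel parts, and I will outline the proof of the characterization of $(\CQ,\CF)^\perp$; the case ${}^\perp(\CQ,\CF)$ is entirely dual, with the left adjoint $e^v_{\lambda,\mathcal{G}}$ replaced by the right adjoint $e^v_{\rho,\mathcal{G}}$, arrow directions reversed, and kernels replaced by cokernels. As the authors indicate, the plan is to transport the argument of \cite[Theorem 3.1]{EHHS} from $\Mod R$ to an arbitrary Grothendieck category $\mathcal{G}$.

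The key tool is the adjoint pair $e^v_{\lambda,\mathcal{G}}\dashv e^v_{\mathcal{G}}$. The evaluation functor $e^v_{\mathcal{G}}$ is exact (limits and colimits in $\Rep(\CQ,\mathcal{G})$ are computed vertex-wise), hence its left adjoint preserves projectives. Moreover $e^v_{\lambda,\mathcal{G}}$ is itself exact, since at vertex $w$ it is given by the coproduct $\bigoplus_{\CQ(v,w)}(-)$, and coproducts are exact in a Grothendieck category. Applying $e^v_{\lambda,\mathcal{G}}$ to a projective resolution of $F\in\mathcal{G}$ produces a projective resolution of $e^v_{\lambda,\mathcal{G}}(F)$ in $\Rep(\CQ,\mathcal{G})$, and the $\Hom$-adjunction lifts to
\[
\Ext^n_{\Rep(\CQ,\mathcal{G})}\bigl(e^v_{\lambda,\mathcal{G}}(F),\CX\bigr)\;\cong\;\Ext^n_{\mathcal{G}}(F,\CX_v) \qquad (F\in\mathcal{G},\ \CX\in\Rep(\CQ,\mathcal{G}),\ n\geq 0),
\]
which serves as the technical bridge between the two Ext computations.

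For the forward direction, assume $\CX\in(\CQ,\CF)^\perp$. Condition (i) is extracted by, for each vertex $v$ and each $F\in\CF$, constructing a suitable test representation in $(\CQ,\CF)$ built from $F$ and applying the displayed isomorphism to turn the Ext-vanishing in $\Rep(\CQ,\mathcal{G})$ into $\Ext^1_{\mathcal{G}}(F,\CX_v)=0$. The hypothesis that $\CF$ contains a generator of $\mathcal{G}$ is exactly what secures enough such test objects inside $(\CQ,\CF)$, even when $\CF$ itself need not be closed under coproducts. For condition (ii), one uses a second family of test representations designed to separate $\CX_v$ from $\bigoplus_{s(a)=v}\CX_{t(a)}$; the resulting vanishings simultaneously force $\eta_{\CX,v}$ to be an epimorphism and place $\Ker(\eta_{\CX,v})$ in $\CF^\perp$.

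For the reverse direction, assume (i) and (ii) and let $\CY\in(\CQ,\CF)$ be arbitrary. I would induct on the number of vertices of $\CQ$, using acyclicity to pick either a source or a sink vertex and to decompose $\CY$ via a short exact sequence whose outer terms involve the restriction of $\CY$ to the sub-quiver on $V\setminus\{v\}$ and a representation built directly from $\CY_v$. The long exact sequence of $\Ext(-,\CX)$ then splits the vanishing of $\Ext^1_{\Rep(\CQ,\mathcal{G})}(\CY,\CX)$ into (a) a piece handled by the displayed comparison together with (i), and (b) a piece handled inductively on the smaller quiver, using (ii) to verify that the restriction of $\CX$ still satisfies the analogous hypotheses.

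The main obstacle, as in the module case of \cite[Theorem 3.1]{EHHS}, is the forward direction: constructing enough test objects inside $(\CQ,\CF)$ to detect simultaneously the vertex-wise Ext-vanishing and the arrow-wise epimorphism condition on $\eta_{\CX,v}$, under the weak assumption that $\CF$ merely contains a generator rather than being closed under arbitrary coproducts. Once the test objects are in place and the adjunction isomorphism has been set up, the remainder of the argument is formal manipulation of long exact sequences and the stepwise structure of the acyclic finite quiver $\CQ$.
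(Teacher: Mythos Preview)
Your proposal is correct and follows precisely the approach the paper indicates: the paper's proof consists only of the sentence ``The proof is similar to the proof of Theorem \cite[Theorem 3.1]{EHHS} by putting $(\CQ,\CF)=\xi$ and $\mathcal{F}=V_{\xi}$,'' and your outline spells out exactly that transport to a Grothendieck category via the adjunctions $e^v_{\lambda,\mathcal{G}}\dashv e^v_{\mathcal{G}}$ (and dually $e^v_{\mathcal{G}}\dashv e^v_{\rho,\mathcal{G}}$), the resulting Ext comparison, test objects built from $\CF$, and a vertex-by-vertex reduction along the acyclic quiver. In fact your sketch already contains more detail than the paper supplies.
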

\begin{proof}
The proof is similar to the proof of Theorem \cite[Theorem 3.1]{EHHS} by putting $(\CQ,\CF)=\xi$ and $\mathcal{F}=V_{\xi}$.
\end{proof}
By lemma \ref{lemma 31} we immediately get the next result.
\begin{corollary}
\label{corollary 31}
Let $\mathcal{Q}$ be an acyclic finite quiver and $\mathcal{A}$ be an abelian category which has
enough projective objects. Let $(\mathcal{F},\mathcal{C})$ be a complete cotorsion pair in $C(\mathcal{A})$, then $((\CQ,\CF),(\CQ,\CF)^\perp)$ and $({}^\perp(\CQ,\CC),(\CQ,\CC))$ are complete cotorsion pairs.
\end{corollary}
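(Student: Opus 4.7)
The strategy is to pass through the canonical equivalence of abelian categories $\C(\Rep(\CQ,\mathcal{G}))\cong \Rep(\CQ,\C(\mathcal{G}))$ and apply Lemma \ref{lemma 31} with the base Grothendieck category taken to be $\C(\mathcal{G})$ rather than $\mathcal{G}$ itself. The plan is essentially to recognise that a complex of quiver representations is the same thing as a quiver representation valued in complexes, and that Lemma \ref{lemma 31} is already formulated for an arbitrary Grothendieck category and an arbitrary class of objects in it.

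First I would record the two prerequisites. The category $\C(\mathcal{G})$ is itself a Grothendieck category whenever $\mathcal{G}$ is, since direct limits in $\C(\mathcal{G})$ are computed degreewise and a generator for $\mathcal{G}$ gives rise (via the standard disk/sphere construction) to a generator for $\C(\mathcal{G})$. Moreover the ``swap'' functor sending $\CX^\bullet=(\CX^i,d^i)\in \C(\Rep(\CQ,\mathcal{G}))$ to the representation $v\mapsto \CX^\bullet_v$ in $\Rep(\CQ,\C(\mathcal{G}))$ is an equivalence of abelian categories, so it preserves short exact sequences and $\Ext^1$. Under this equivalence, the class $\C(\CQ,\CF)$ appearing in the statement is identified with the class $(\CQ,\CF)$ obtained by applying Notation \ref{Not} to the Grothendieck category $\C(\mathcal{G})$ and the class $\CF$ of objects of $\C(\mathcal{G})$, in the same spirit as the Remark following Example \ref{example}.

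Next I would invoke Lemma \ref{lemma 31} twice with $\mathcal{G}$ replaced by $\C(\mathcal{G})$. From $(\CF,\CC)$ being a complete cotorsion pair in $\C(\mathcal{G})$ we have $\CC=\CF^\perp$ and $\CF={}^\perp\CC$; hence both $(\CF,\CF^\perp)$ and $({}^\perp\CC,\CC)$ are complete cotorsion pairs in $\C(\mathcal{G})$. Applying Lemma \ref{lemma 31} produces two complete cotorsion pairs $((\CQ,\CF),(\CQ,\CF)^\perp)$ and $({}^\perp(\CQ,\CC),(\CQ,\CC))$ in $\Rep(\CQ,\C(\mathcal{G}))$. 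Transporting these back across the swap equivalence delivers the desired complete cotorsion pairs in $\C(\Rep(\CQ,\mathcal{G}))$.

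I do not expect a genuine obstacle; the only point that demands attention is the bookkeeping in the previous paragraph, namely the identification $\C(\CQ,\CF)\leftrightarrow (\CQ,\CF)$ under the swap equivalence and the verification that $\Ext^1$ computed in the two sides matches. Once these are in hand, the corollary is an immediate application of Lemma \ref{lemma 31}.
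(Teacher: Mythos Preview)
Your proposal is correct and is essentially the same argument as the paper's own proof, which consists of the single sentence ``Since $\C(\mathcal{G})$ is a Grothendieck category, therefore by using Lemma \ref{lemma 31} we are done.'' You have merely made explicit the identification $\C(\Rep(\CQ,\mathcal{G}))\cong \Rep(\CQ,\C(\mathcal{G}))$ and the corresponding match $\C(\CQ,\CF)\leftrightarrow(\CQ,\CF)$ that the paper leaves implicit (and alludes to in the Remark following Example \ref{example}).
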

\begin{proof}
Since $C(\mathcal{A})$ is an abelian category which has enough projective objects, therefore by using lemma \ref{lemma 31} we are done.  
\end{proof}

\begin{definition}
Let $\mathcal{A}$ be an abelian category. Suppose that $(\CA,\mathcal{B})$ and $(\mathcal{F},\mathcal{C})$ are two complete cotorsion pairs in $C(\mathcal{A})$. We say that they are compatible (or Hovey Pair) if $\mathcal{B}=\mathcal{C}\cap \mathcal{E}$ and $\mathcal{F}=\CA\cap \mathcal{E}$ where $\mathcal{E}$ is the class of all exact complexes in $C(\mathcal{A})$.
\end{definition}
\begin{theorem}
\label{prop 32}
Let $\mathcal{Q}$ be an acyclic finite quiver and $\mathcal{A}$ be an abelian category which has enough projective objects. Suppose that $(\CA,\mathcal{B})$ and $(\mathcal{F},\mathcal{C})$ is a Hovey pair in $C(\mathcal{A})$, then 
\begin{itemize}
\item[(a)]$((\CQ,\CA),(\CQ,\CA)^\perp)$ and $((\CQ,\CF),(\CQ,\CF)^\perp)$ is a Hovey pair in $C(Rep(\mathcal{Q},\mathcal{A}))$.
\item[(b)]$({}^\perp(\CQ,\mathcal{B}),(\CQ,\mathcal{B}))$ and $({}^\perp(\CQ,\CC)),(\CQ,\CC))$ is a Hovey pair in $C(Rep(\mathcal{Q},\mathcal{A}))$.
\end{itemize}
\end{theorem}
\begin{proof}
The two statements are dual. We will prove the first one. First of all by lemma \ref{lemma 31} $((\CQ,\CA),(\CQ,\CA)^\perp)$ and $((\CQ,\CF),(\CQ,\CF)^\perp)$ are complete cotorsion pairs. So we show that these cotorsion pairs are compatible. To this point we use lemma \ref{lemma 32}. Clearly $\mathcal{F}$ contains projective objects of $C(\mathcal{A})$. 
By assumption we have $\mathcal{B}=\mathcal{C} \cap \mathcal{E}$ and $\mathcal{F}=\CA \cap \mathcal{E}$. Let $\mathcal{E}_{\CQ}$ be the class of all exact complexes in $C(Rep(\CQ,\mathcal{A}))$. We have to show that $(\CQ,\CA)^\perp=(\CQ,\CF)^\perp \cap \mathcal{E}_{\CQ}$ and $(\CQ,\CF)=(\CQ,\CA) \cap \mathcal{E}_{\CQ}$. The second equality is trivial, since $\mathcal{F}=\CA \cap \mathcal{E}$.
\\
For the first equality first of all we note that $\CA^\perp=\mathcal{F}^\perp \cap \mathcal{E}$, since $\mathcal{B}=\mathcal{C} \cap \mathcal{E}$, $(\CA,\mathcal{B})$ and $(\mathcal{F},\mathcal{C})$ are cotorsion pair. Now let $\mathcal{X}^\bullet \in (\CQ,\CA)^\perp$. By Lemma \ref{lemma 32} $\mathcal{X}^\bullet$ satisfy in two conditions $(i)$ and $(ii)$, since $C(\mathcal{A})$ is an abelian category and $\CA$ contains projective objects of $C(\mathcal{A})$. Therefore for all $v \in V$ we have $\mathcal{X}_v^\bullet \in \CA^\perp$, $\eta_{\CX^\bullet,v}$ is an epimorphism and $ \Ker(\eta_{\CX^\bullet,v})\in \CA^\perp$. Therefore $\CX^\bullet_v, \Ker(\eta_{\CX^\bullet,v})\in \CF^\perp$, hence $\CX^\bullet\in (\CQ,\CF)^\perp$. On the other hand $\CX^\bullet\in \mathcal{E}_{\CQ}$, since $\CX_v^\bullet\in \CE$ for all $v\in V$. Therefore $\mathcal{X}^\bullet \in (\CQ,\CF)^\perp \cap \mathcal{E}_{\CQ}$.
\\
 Conversely, let $\CY^\bullet \in (\CQ,\CF)^\perp \cap \mathcal{E}_{\CQ}$. Since $\CY^\bullet\in (\CQ,\CF)^\perp$, hence by Lemma \ref{lemma 32} for all $v \in V$ we have $\mathcal{Y}_v^\bullet \in \mathcal{F}^\perp$, $\eta_{\CY^\bullet,v}$ is an epimorphism and $ \Ker(\eta_{\CY^\bullet,v})\in \mathcal{F}^\perp$. We also have $\Ker(\eta_{\CY^\bullet,v})\in \mathcal{E}$, since $\mathcal{Y}_v^\bullet \in \mathcal{E}$ for each vertex $v \in V$. Therefore $\Ker(\eta_{\CY^\bullet,v}),\CY_v\in \mathcal{F}^\perp\cap \CE=\CA^\perp$ for all $v\in V$. Now by Lemma \ref{lemma 32} we can say that $\CY^\bullet \in (\CQ,\CA)^\perp$. So we are done.  
\end{proof}
\begin{remark}
\label{Remark}
The above Theorem is a special case of \cite[Propositions 3.2 and 3.3]{D} but with different proof. Indeed, Let $\CQ$ be a left rooted quiver (so any acyclic finite quiver is left rooted) and let $(\CA,\mathcal{W},\CC)$ be a Hovey triple (or equivalently $(\CA,\mathcal{B})$ and $(\mathcal{F},\mathcal{C})$ is a Hovey pair, where $\mathcal{B}=\mathcal{W}\cap\CC$ and $\CF=\CA\cap \mathcal{W}$ in an abelian category $\mathcal{M}$ with coproducts and enough projectives. In \cite[Proposition 3.2]{D} it is proved that if $\mathcal{W}$ is closed under (small) coproducts then $(\CA,\mathcal{W},\CC)$ induces a certain Hovey triple
$(\Phi(\CA), \mathcal{T}, Rep(\CQ,\CC))$ in the abelian category $Rep(\CQ,\mathcal{M})$. Here by Lemma \ref{lemma 32} $\Phi(\CA)$ coincides with ${}^\perp(\CQ, \mathcal{B})$ and $Rep(\CQ,\CC)$ coincides  with $(\CQ,\CC)$. Thus, in our notation, the Hovey triple in $Rep(\CQ,\mathcal{M})$ produced by
\cite[Proposition 3.2]{D} is $({}^\perp(Q, \mathcal{B}), \mathcal{T}, (\CQ,\CC))$, equivalently,
$({}^\perp(Q, \mathcal{B}), (\CQ, \mathcal{B}))$ and $({}^\perp(Q, \mathcal{C}), (\CQ, \mathcal{C}))$
is a Hovey pair in $Rep(\CQ,\mathcal{M})$. Note that in case $\CQ$ is  sufficiently finite (In particular for acyclic and finite quiver) the assumption that $\mathcal{W}$ is closed under coproducts is not needed. Applying this to $\mathcal{M}=\CC(\mathcal{A})$ and using that $Rep(\CQ,C(\mathcal{A}))=C(Rep(\CQ,\mathcal{A}))$, one gets the Theorem \ref{prop 32}(b). Similarly, Theorem \ref{prop 32}(a) follows from \cite[Proposition 3.3]{D}. 
\end{remark}
\vspace{0.1cm}

Let $\CQ=(V,E)$ be an acyclic finite quiver and $\mathcal{A}$ be an abelian category. We will show that if we have a monoidal model structure on $C(\mathcal{A})$, then we can construct a monoidal model structure on $C(Rep(\mathcal{Q},\mathcal{A}))$. One of the reasons we are interested in monoidal category is that its homotopy category is also a symmetric monoidal category. We will remind the reader of the definition below; for more detail, see \cite[Chapter 4]{Hov99}.

In the category theory a symmetric monoidal category is a category $\mathcal{C}$ equipped with a functor  $\otimes:\mathcal{C}\times\mathcal{C}\longrightarrow \mathcal{C}$, called the tensor product, a unit object $S\in \mathcal{C}$, a natural associativity isomorphism $a_{X,Y,Z}:(X\otimes Y)\otimes Z \lrt X\otimes (Y\otimes Z)$, a natural left unit isomorphism $\lambda_X: S\otimes X \lrt X$, a natural right unit isomorphism $\rho_X: X\otimes S \lrt X$ and a natural isomorphism $B_{X,Y}: X\otimes Y \lrt Y\otimes X$ called the braiding, such that three coherence diagram commute. These coherence diagrams can be found in any references on category  such as \cite{ML71}.

A symmetric monoidal category $\mathcal{C}$ is closed if for all objects, $X\in \mathcal{C}$ the functor $-\otimes X: \mathcal{C}\lrt \mathcal{C}$ has a right adjoint functor.

Now suppose that $C(\mathcal{A})$ is a closed symmetric monoidal category equipped with the tensor product $\otimes$ and unit object $S$. In the following we will show that $C(Rep(\mathcal{Q},\mathcal{A}))$ is a closed symmetric monoidal category. To this point we define a new tensor product $\otimes_{cw}$ on $C(Rep(\mathcal{Q},\mathcal{A}))$. Let $\CX=(\CX_v,\varphi_a)_{v\in V, a\in E}$ and $\CY=(\CY_v,\psi_a)_{v\in V, a\in E}$ be two objects in $C(Rep(\mathcal{Q},\mathcal{A}))$. For each vertex $v\in V$, define $(\CX \otimes_{cw} \CY)_v= \CX_v \otimes \CY_v$, and for each arrow $a:v\lrt w$, define $\xymatrix{(\CX \otimes_{cw} \CY)_v \ar[r]^{\varphi_a \otimes_{cw}\psi_a} & (\CX \otimes_{cw} \CY)_w}$ by $\xymatrix{\CX_v\otimes \CY_v \ar[r]^{\varphi_a \otimes \psi_a} & \CX_w\otimes \CY_w}$. We also define the unit object $\mathcal{S}$ in $C(Rep(\mathcal{Q},\mathcal{A}))$ as follows:

For each vertex $v\in V$, set $\mathcal{S}_v= S$ and for each arrow $a:v\lrt w$, consider $\mathcal{S}_v \longrightarrow \mathcal{S}_w$ as a identity morphism.

It is straightforward to check that $(C(Rep(\mathcal{Q},\mathcal{A})),\otimes_{cw},\mathcal{S})$ is a symmetric monoidal category, since $(C(\mathcal{A}), \otimes, S)$ is so. Let $\CX$ be an arbitrary object in $C(Rep(\mathcal{Q},\mathcal{A}))$. Since the functor $-\otimes_{cw} \CX$ is right exact and preserves direct sums, it will have a right adjiont functor. Hence $(C(Rep(\mathcal{Q},\mathcal{A})),\otimes_{cw},\mathcal{S})$ is closed. 

Now suppose that we have an abelian model structure on $C(\mathcal{A})$ with $(\CA,\mathcal{B}), (\CF,\CC)$ as a Hovey pair. Hovey in \cite[Theorem 7.2]{Hov02} determine conditions on the functorially Hovey pair under which the resulting model structure will be compatible with the tensor product. To see that the model structure is monoidal ( with respect to the tensor product $\otimes$) we will prove the hypotheses of Hovey's Theorem 7.2. So we have the following theorem:
\begin{theorem}
Suppose that we have a monoidal model structure on $C(\mathcal{A})$ with respect to tensor product $\otimes$. Then there is a monoidal model structure on $C(Rep(\mathcal{Q},\mathcal{A}))$ with respect to tensor product $\otimes_{cw}$.
\end{theorem}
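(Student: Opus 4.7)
The plan is to invoke Hovey's monoidal criterion \cite[Theorem 7.2]{Hov02} applied to the Hovey pair
\[
(\C(\CQ,\CA),\C(\CQ,\CA)^\perp)\quad\text{and}\quad(\C(\CQ,\CF),\C(\CQ,\CF)^\perp)
\]
furnished by Proposition \ref{prop 32}(a), where $(\CA,\mathcal{B}),(\CF,\CC)$ is the Hovey pair witnessing the given monoidal model structure on $\C(\mathcal{G})$. The guiding observation is that $\otimes_{\cw}$, its unit $\mathcal{S}$, and all (co)limits in $\C(\Rep(\mathcal{Q},\mathcal{G}))$ are defined vertexwise, so every monoidal axiom reduces to its counterpart in $\C(\mathcal{G})$ checked one vertex at a time.

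First I would set up the vertexwise dictionary. Since the evaluation functor $e^v_{\mathcal{G}}$ admits both adjoints $e^v_{\lambda,\mathcal{G}}$ and $e^v_{\rho,\mathcal{G}}$ recalled in Section \ref{preliminaries}, it preserves all limits and colimits; in particular, cokernels and pushouts of morphisms of representations are computed componentwise. Combined with Notation \ref{Not} and the standard fact that cofibrations in an abelian model structure are exactly the monomorphisms with cofibrant cokernel, this shows that a morphism $f:\CX\to\CY$ in $\C(\Rep(\mathcal{Q},\mathcal{G}))$ is a (trivial) cofibration in the componentwise structure exactly when each $f_v$ is a (trivial) cofibration in $\C(\mathcal{G})$; weak equivalences, being homology isomorphisms, are likewise vertexwise detected.

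Next I would verify the pushout-product axiom. For cofibrations $f,g$, the pushout-product
\[
f\,\square_{\cw}\,g\colon (\CY\otimes_{\cw}\CX')\sqcup_{\CX\otimes_{\cw}\CX'}(\CX\otimes_{\cw}\CY')\lrt\CY\otimes_{\cw}\CY'
\]
evaluates at each vertex $v$ to the classical pushout-product $f_v\,\square\,g_v$ in $\C(\mathcal{G})$, since both $\otimes_{\cw}$ and pushouts commute with $e^v_{\mathcal{G}}$. The pushout-product axiom assumed on $\C(\mathcal{G})$ then forces $(f\,\square_{\cw}\,g)_v$ to be a cofibration, trivial whenever $f_v$ or $g_v$ is, so the dictionary above yields the global statement. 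For the unit axiom I would exhibit a cofibrant replacement $q\colon QS\to S$ in $\C(\mathcal{G})$ and form the constant representation $Q\mathcal{S}$ with $(Q\mathcal{S})_v=QS$ and identity transitions; this is vertexwise in $\CA$ (hence cofibrant in our structure) and $Q\mathcal{S}\to\mathcal{S}$ is a vertexwise weak equivalence. For any cofibrant $\CX$ the map $Q\mathcal{S}\otimes_{\cw}\CX\to\mathcal{S}\otimes_{\cw}\CX\cong\CX$ restricts at each vertex to $q\otimes\CX_v$, a weak equivalence in $\C(\mathcal{G})$ by the unit axiom there.

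I expect the main subtlety to be the first step: pinning down the componentwise characterization of cofibrations, trivial cofibrations, and weak equivalences for the Hovey pair of Proposition \ref{prop 32}(a). Once this dictionary is firmly in hand, resting only on the adjoints of $e^v_{\mathcal{G}}$ making (co)kernels vertexwise, the monoidal axioms transfer essentially formally from $\C(\mathcal{G})$ to $\C(\Rep(\mathcal{Q},\mathcal{G}))$, and no genuinely new categorical input is required.
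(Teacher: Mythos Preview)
Your approach is correct and rests on the same idea as the paper's proof: everything defining the monoidal model structure on $\C(\Rep(\mathcal{Q},\mathcal{G}))$---the tensor product $\otimes_{\cw}$, the unit $\mathcal{S}$, (co)limits, and the cofibrant and trivially cofibrant classes $(\CQ,\CA)$, $(\CQ,\CF)$---is vertexwise, so the monoidal axioms reduce to the assumed ones in $\C(\mathcal{G})$.

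The presentational difference is that the paper literally checks Hovey's object-level hypotheses (a)--(d) of \cite[Theorem 7.2]{Hov02}: it takes $\mathcal{P}$ to be all short exact sequences, notes that $\mathcal{P}$-purity then means vertexwise purity, and asserts that the conditions on cofibrant and trivially cofibrant \emph{objects} transfer componentwise. You instead verify the pushout-product and unit axioms for \emph{morphisms} directly, which is really checking the monoidal model category definition of \cite[Chapter 4]{Hov99} rather than the hypotheses of the theorem you cite. Both routes are valid and rely on exactly the same vertexwise dictionary you set up; your version is more explicit, while the paper's leans harder on Hovey's criterion. If you keep your argument as written, you may simply want to adjust the citation to point at the definition of monoidal model category rather than \cite[Theorem 7.2]{Hov02}, or else rephrase your checks in terms of Hovey's conditions (a)--(d).
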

\begin{proof}
Let $(\CA,\mathcal{B})$, $(\CF,\CC)$ be a Hovey pair in $C(\mathcal{A})$. By Theorem \ref{prop 32}  $((\mathcal{Q},\CA),{(\mathcal{Q},\CA)}^\perp)$ and $((\mathcal{Q},\CF),{(\mathcal{Q},\CF)}^\perp)$ is a Hovey pair in $C(Rep(\mathcal{Q},\mathcal{A}))$. So the class of cofibrant object is equal to $(\mathcal{Q},\CA)$ and trivial cofibrant is equal to $(\mathcal{Q},\CF)$.
In view of \cite[Theorem 7.2]{Hov02} taking $\mathcal{P}$  to be the class of all short exact sequences in $C(Rep(\mathcal{Q},\mathcal{A}))$. Then we observe that Hovey's notion of $\mathcal{P}$-pure short exact sequence in this case just means a short exact sequence of complexes in $C(Rep(\mathcal{Q},\mathcal{A}))$ that is pure in each vertex $v\in V$. According to the Hovey's theorem it is easy to check that all conditions (a), (b), (c) and (d) satisfy with respect to $\otimes_{cw}$, since we have these condition for $\otimes$.
\end{proof}

\section{Two descriptions of Derived category }
\label{section 4}
Let $\mathcal{Q}$ be an acyclic finite quiver and $R$ be  an associative ring with identity. In this section we introduce some descriptions of the derived category of representations of $\mathcal{Q}$ in $\Mod R$. We write $D(\mathcal{Q})$ (resp. $K(\mathcal{Q})$, $C(\mathcal{Q})$) instead of $D(\QR)$ (resp. $K(\QR)$, $C(\QR)$). 

Let $\mathcal{E}$ be the class of exact complexes of $R$-modules. Recall that a complex $X^\bullet$ is DG-projective (DG-injective) if each $X^n$ is projective (resp. injective) and if $\mathcal{H}om(X^\bullet, E^\bullet)$ (resp. $\mathcal{H}om(E^\bullet, X^\bullet)$) is an exact complex for all $E^\bullet\in \mathcal{E}$. We denote by $dgPrj-R$ ($dgInj-R$) the class of all DG-projective (resp. DG-injective) complexes of $R$-modules. Since there is an equivalence between $\QR$ and $\Mod R\CQ$, we can define the concept of DG-Projective (DG-injective) complexes of representations of quiver $\CQ$ as the image of DG-projective (resp.  DG-injective) complexes of $R\CQ$-modules, under this equivalence. By \cite[Theorem 4.2.7]{ABHV}, a complex $\CX^\bullet\in C(\CQ)$ is a DG-projective if and only if for every vertex $v\in V$, $\CX^\bullet_v$ is a DG-projective complex in $C(R)$.
Throughout this section we use the following four different classes of representation of quiver $\mathcal{Q}$:

\begin{itemize}
\item[$\bullet$]${Prj}^{op}\mbox{-}\CQ=$ all representations $\CX\in\QR$ such that for every vertex $v$, $\CX_v$ is a projective module and the map $\eta_{\CX,v}:\mathcal{X}_v \rightarrow \oplus_{s(a)=v}\mathcal{X}_{t(a)}$ is split epimorphism.
\\
\item[$\bullet$]${Inj}^{op}\mbox{-}\CQ=$ all representations $\CX\in\QR$ such that for every vertex $v$, $\CX_v$ is injective module and the map $\xi_{\CX,v} : \oplus_{t(a)=v}\mathcal{X}_{s(a)}\rightarrow \mathcal{X}_v$ is split monomorphism.
\\
\item[$\bullet$]$dgPrj^{op}\mbox{-}\CQ=$ all representation  $\CX^\bullet\in Rep(\mathcal{Q},C(R))$ such that for every vertex $v$, $\CX^\bullet_v$ is DG-projective complexes of $R$-modules and the map $\eta_{\CX^\bullet,v}$ is split epimorphism.
\\
\item[$\bullet$]$dgInj^{op}\mbox{-}\CQ=$ all representation  $\CX^\bullet\in Rep(\mathcal{Q},C(R))$ such that for every vertex $v$, $\CX^\bullet_v$ is DG-injective complexes of $R$-modules and the map $\xi_{\CX^\bullet,v}$ is split monomorphism.
\end{itemize}

\vspace{0.2cm}
bf{Componentwise projective model structure:} Consider the well known Hovey pair $({dgPrj-}R,\CE)$ and $(Prj-C(R),C(R))$ in $C(R)$. Therefore by Theorem \ref{prop 32} we have the following
Hovey pair
$$\quad((\mathcal{Q},{dgPrj-}R),{(\mathcal{Q},{dgPrj-}R)}^\perp)\,\,\,\ , \,\,\,\,((\mathcal{Q}, Prj-C(R)),{(\mathcal{Q}, Prj-C(R))}^\perp)$$
in $C(\QR)$. 

By \cite[Theorem 2.2]{Hov02} we have an abelian model structure on $C(\CQ)$ which we call it{componentwise projective model structure}, where the weak equivalences are the homology isomorphisms, the cofibrations (resp. trivial cofibrations) are the monomorphisms with cokernels
in $(\CQ, {dgPrj-}R)$ (resp, $(\CQ,Prj-C(R)))$, and the fibrations (resp. trivial fibrations) are the
epimorphisms whose kernels are in $(\CQ,Prj-C(R))^\perp)$ (resp. $(\CQ, {dgPrj-}R)^\perp$).

Clearly the homotopy category of this model structure is equal to $D(\mathcal{Q})$. As we see in example \ref{example} this model structure is different than well known projective model structure on $C(\CQ)$. In the following we introduce some applications of componentwise projective model structureon $C(\CQ)$. First, we recall the notion of cofibrant replacement and fibrant replacement.
\begin{definition}
\label{def 41}
Let $\mathcal{C}$ be a model category. The axioms of model structure on $\mathcal{C}$ implies that any object $X\in \mathcal{C}$ has a cofibrant resolution consisting of cofibrant object $QX\in \mathcal{C}$ equipped with a trivially fibration $QX\lrt X$ in $\mathcal{C}$. Dually, $X$ has also a fibrant resolution consisting of a fibrant object $RX\in \mathcal{C}$ equipped with a trivially cofibration $X\lrt RX$. The object $QX$ (resp. $RX$) is called cofibrant replacement (resp. fibrant replacement) of $X$. 
\end{definition}

\begin{example}
\label{example 41}
Let $\mathcal{Q}$ be the quiver and $\CX^\bullet\in C(\mathcal{Q})$. Consider the componentwise projective model structure on $C(\mathcal{Q})$. We want to characterize a cofibrant replacement of $\CX^\bullet$. By definition \ref{def 41} if $Q\CX^\bullet$ is the cofibrant replacement of $\CX^\bullet$, then $ Q\CX^\bullet\in (\mathcal{Q},{dgPrj-}R)$ and $Q\mathcal{X}^\bullet \rightarrow \mathcal{X}^\bullet$ must be epimorphism such that $\Ker \rho\in (\mathcal{Q},{dgPrj-}R)^\perp$, i.e. for each vertex  $v\in V$, $(\Ker \rho)_v\in \CE$  and $\eta_{\ker \rho , v}$ is epimorphism. We construct $\mathcal{Q}\CX^\bullet$ in two steps.

{Step 1.} Since $({dgPrj-}R, \CE)$ is a complete cotorsion pair in $C(R)$, hence for each vertex $v\in V$ consider $P^\bullet$ as a DG-projective resolution of $\CX^\bullet_v$. By lifting property there exists $\mathcal{P}$ such that $\mathcal{P}\rightarrow  \mathcal{X}^\bullet$ is epimorphism. Now consider the short exact sequence $0 \rightarrow \mathcal{K} \rightarrow  \mathcal{P}\rightarrow \mathcal{X}^\bullet\rightarrow 0$. Clearly for each vertex $v\in V$, $\mathcal{K}_v\in \CE$ but $\eta_{\mathcal{K}, v}$ is not necessarily epimorphism.

 {Step 2.} Let $\mathcal{K}\rightarrow  \mathcal{P}$ be as follows
$$\xymatrix{ & K_2^\bullet \ar@{.>}[rr]^{\imath_2} & & P_2^\bullet \\
 K_1^\bullet \ar[ru]^{k_2} \ar[rd]_{k_3}   \ar@{.>}[rr]^{\imath_1} & & P_1^\bullet \ar[ru]^{p_2} \ar[rd]_{p_3} \\ 
 & K_3^\bullet \ar@{.>}[rr]^{\imath_3} & & P_3^\bullet}$$
Since  $\eta_{\mathcal{K},2}:K^\bullet_2\rightarrow 0$ and $\eta_{\mathcal{K},3}:K^\bullet_3\rightarrow 0$, hence in order to show that for each vertex $v\in V$, $\eta_{\mathcal{K}, v}$ is epimorphism we just need to focus on $\eta_{\mathcal{K},1}:K_1^\bullet \longrightarrow K_2^\bullet \oplus K_3^\bullet$. Consider a chain map $P^\bullet_{K_i} \rightarrow K_i^\bullet$ such that $P^\bullet_{K_i}$ is a projective complex and $\pi_i$ is an epimorphism, for $i=2,3$.  Clearly ${Cok}(\imath')$ is equal to $\CX^\bullet$ and $\eta_{\mathcal{K}',1}$ is epimorphism. But $\mathcal{P}'$ is a cofibrant object. Indeed we add projective complexes in vertex 1 of $\mathcal{P}$, so each vertex of $\mathcal{P}'$ is DG-projective complex.  Hence we introduce $\mathcal{P}'$ as a cofibrant replacement of $\mathcal{X}^\bullet$.
\end{example}
As we saw above in spite of ordinary projective model structure on $C(\CQ)$, the cofibrant replacement is obtained by considering a DG-projective resolution in each vertex and we do not care about arrows.
In the following we need more focus on the homotopy relation of this model structure.

Let $\mathcal{Q}$ be an acyclic finite quiver. Consider the componentwise projective model structure on $C(\mathcal{Q})$. Let $\CC$ (resp, $\CF$) be a class of cofibrant (resp. fibrant) objects and $\mathcal{W}$ be a class of trivial objects in this model structure. As we know above $\CC=(\mathcal{Q},{dgPrj-}R)$, $\mathcal{W}=\mathcal{E}_{\CQ}$ and $\CF={(\mathcal{Q}, \Prj C(R))}^\perp$. To understand the homotopy relation on this model structure we use lemma \ref{lemma 27}. By lemma \ref{lemma 31} we can say that $\mathcal{X}^\bullet$ is fibrant object if and only if for each vertex $v\in V$, $\eta_{\CX^\bullet,v}$ is an epimorphism. Furthermore the class $\mathcal{C}\cap \mathcal{W}=(\mathcal{Q},Prj-C(R))$. Hence the class $\mathcal{C}\cap \mathcal{W}\cap \CF$ is exactly equal to all objects $\mathcal{X}^\bullet\in C(\mathcal{Q})$ such that satisfy in the following conditions:
$$
 \begin{array}{ll}
(1)\,\,\mathcal{X}^\bullet_v\in \Prj C(R)\,\, {for each vertex} \,\, v\in V \\

(2)\,\, {For each vertex} \,\,\, v\in V, \eta_{\CX^\bullet,v}:\mathcal{X}_v^\bullet \rightarrow \bigoplus_{s(a)=v}\mathcal{X}_{t(a)}^\bullet \,\,\, {is epimorphism.}
\end{array}.
$$
So by lemma \ref{lemma 27} if $\mathcal{X}^\bullet$ is cofibrant object and $\mathcal{Y}^\bullet$ is fibrant object and $f,g:\mathcal{X}^\bullet\rightarrow\mathcal{Y}^\bullet$ then we say that $f$ and $g$ are homotopic, written $f\sim_{cw} g$, if and only if $f-g$ factor through an object $\mathcal{P}^\bullet$ such that satisfying two conditions above.

Next step, we will show the connection between this homotopy relation and ordinary homotopy relation in $C(\mathcal{Q})$.
\begin{lemma}
\label{lemma 41}
Let $\mathcal{Q}$ be an acyclic finite quiver. Consider componentwise projective model structure on $C(\mathcal{Q})$. If $f,g:\CX^\bullet \lrt \CY^\bullet$ are two morphisms of fibrant and cofibrant objects, then $f\sim_{cw} g$ if and only if $f\sim g$.
\end{lemma}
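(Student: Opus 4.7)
The plan is to prove both implications separately. The forward implication $\sim \Rightarrow \sim_{\cw}$ reduces to checking that the canonical $\mathcal{S}$-injective hull $I(\mathcal{X}^\bullet) = \bigoplus_{i} \overline{\mathcal{X}^i}[i]$ itself satisfies the conditions in $(\divideontimes)$ when $\mathcal{X}^\bullet$ is cofibrant and fibrant. Indeed, since $I$ is computed vertexwise, $I(\mathcal{X}^\bullet)_v = I(\mathcal{X}^\bullet_v)$; cofibrancy of $\mathcal{X}^\bullet$ forces each $\mathcal{X}^i_v$ to be projective, so $I(\mathcal{X}^\bullet)_v$ is a direct sum of disks on projectives and hence lies in $\Prj\,\C(R)$, giving condition~(1); and $I$ preserves epimorphisms, so $\eta_{I(\mathcal{X}^\bullet),v} = I(\eta_{\mathcal{X}^\bullet,v})$ is epi from fibrancy, giving condition~(2). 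The standard chain-homotopy factorization of $f - g$ through $I(\mathcal{X}^\bullet)$ is then simultaneously a factorization through a trivially cofibrant and fibrant object, so $f \sim_{\cw} g$.

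For the reverse direction $\sim_{\cw} \Rightarrow \sim$, suppose $f - g = \beta \alpha$ factors through a $\mathcal{P}^\bullet$ satisfying $(\divideontimes)$. It suffices to produce a chain contraction $s = (s_v)$ of $\mathcal{P}^\bullet$ as a natural transformation of representations, since $\beta s \alpha$ will then witness $f \sim g$. Using that $\mathcal{Q}$ is acyclic and finite, fix a topological order $v_1, \ldots, v_n$ of its vertices with arrows $v_i \to v_j$ satisfying $i < j$, and build $s$ by reverse induction. At the sink $v_n$, take any contraction of $\mathcal{P}^\bullet_{v_n}$, which exists because $\mathcal{P}^\bullet_{v_n} \in \Prj\,\C(R)$ is contractible. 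At a general $v$, assuming $s_w$ is already defined compatibly for every $w$ with an arrow $v \to w$, use the epimorphism $\eta_v : \mathcal{P}^\bullet_v \twoheadrightarrow \bigoplus_{s(a)=v} \mathcal{P}^\bullet_{t(a)}$ (epi by fibrancy, onto a complex of projectives) to pick a degreewise section $\sigma$, and set $s_v^{(0)} := \sigma \tilde{s}\, \eta_v$ with $\tilde{s} := \bigoplus s_{t(a)}$. The identity $\eta_v \sigma = \mathrm{id}$ immediately yields $\mathcal{P}(a)\, s_v^{(0)} = s_{t(a)}\, \mathcal{P}(a)$ for every outgoing arrow $a : v \to w$, while a direct computation gives $d s_v^{(0)} + s_v^{(0)} d = \mathrm{id} - \mu$, where
\[
\mu \;:=\; (\mathrm{id} - \sigma \eta_v) \;-\; (d\sigma - \sigma d)\,\tilde{s}\, \eta_v
\]
turns out to be a chain map taking values in $\Ker \eta_v$. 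Correcting by $t := \mu h_v$ for any contraction $h_v$ of $\mathcal{P}^\bullet_v$ gives $dt + td = \mu$ and leaves the image inside $\Ker \eta_v$; thus $s_v := s_v^{(0)} + t$ is a contraction compatible with all outgoing arrows, closing the induction.

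The main obstacle is the verification, in the inductive step, that the obstruction $\mu$ really is a chain map landing in $\Ker \eta_v$, for only then can it be killed by a null-homotopy without destroying the naturality already secured by $s_v^{(0)}$. The key identity is $d\epsilon + \epsilon d = 0$ for $\epsilon := d\sigma - \sigma d$, which follows at once from $d^2 = 0$; together with $\eta \epsilon = 0$ and $\eta \pi = 0$ for $\pi := \mathrm{id} - \sigma \eta$, this forces $d\mu - \mu d = 0$ and $\mu$ to factor through $\Ker \eta_v$, which is exactly what is needed to complete the construction.
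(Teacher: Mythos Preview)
Your proof is correct, and for the implication $\sim \Rightarrow \sim_{\cw}$ it matches the paper's argument (both verify that $I(\mathcal{X}^\bullet)$ satisfies $(\divideontimes)$; the paper does so via the decomposition $\mathcal{X}^i \cong \bigoplus_v e^v_{\rho,R}(P^v)$, while you use the cleaner observation that $I$ commutes with evaluation at vertices and with $\eta_v$).

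For the implication $\sim_{\cw} \Rightarrow \sim$, however, your route is genuinely different from the paper's. The paper invokes the structure theorem of \cite[Theorem~4.2]{EER09}: any $\mathcal{P}^\bullet$ satisfying $(\divideontimes)$ decomposes as $\bigoplus_{v\in V} e^v_{\rho,\C(R)}(P_v^\bullet)$ with $P_v^\bullet = \Ker(\eta_{\mathcal{P}^\bullet,v}) \in \Prj\,\C(R)$, and each summand is then visibly a direct sum of disks on representations of the form $e^v_{\rho,R}(P)$, hence an $\mathcal{S}$-injective in the Frobenius category $(\C(\mathcal{Q}),\mathcal{S})$. You bypass this structure result entirely and instead manufacture a contracting homotopy of $\mathcal{P}^\bullet$ in $\C(\mathcal{Q})$ by reverse induction along a topological ordering of $\mathcal{Q}$, lifting the already-constructed contractions on successors through a degreewise section of $\eta_v$ and then killing the chain-map obstruction $\mu$ (which you correctly check lands in $\Ker\eta_v$) by composing with an arbitrary contraction of $\mathcal{P}^\bullet_v$. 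Your argument is more elementary and self-contained, as it does not appeal to the external decomposition theorem; the paper's argument, on the other hand, yields the finer structural information that objects satisfying $(\divideontimes)$ are precisely direct sums of the $e^v_{\rho,\C(R)}(P_v^\bullet)$, which is of independent interest even if not strictly needed for the lemma.
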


\begin{proof}
Suppose that $f\sim_{cw} g$. By assumption $f-g$ factor through an object $\mathcal{P}^\bullet$ such that satisfying two conditions above. Consider the Frobenius category $(C(\mathcal{Q}),\mathcal{S})$, where $\mathcal{S}$ is   the collection of short exact sequences in
$C(\mathcal{Q})$ of which each term is split short exact in $\QR$. We show that $\mathcal{P}^\bullet$ is an $\mathcal{S}$-injective object in this category. By carrying over the corresponding argument verbatim in proof of Theorem 4.2 in \cite{EER09}, we can say that $\mathcal{P}^\bullet$ is of the form $\bigoplus_{v\in v} e_{\rho,C(R)}^v(P_v^\bullet)$, where for any $v\in V$, $P_v^\bullet$ is the kernel of split epimorphism $\eta_{\mathcal{P}^\bullet,v}$. Since $P_v^\bullet \in \Prj C(R)$, hence $P_v^\bullet=\bigoplus_{i\in Z}\overline{P}[i]$. Therefore $$e_{\rho,C(R)}^v(P_v^\bullet)= e_{\rho,C(R)}^v(\bigoplus_{i\in Z}\overline{P}[i])= \bigoplus_{i\in Z}e_{\rho,C(R)}^v(\overline{P})[i]$$
It is easy to check that $e_{\rho,C(R)}^v(\overline{P})$ is the complex as follows
$$ \cdots \rightarrow 0 \rightarrow e_{\rho,R}^v(P) \rightarrow e_{\rho,R}^v(P) \rightarrow 0 \rightarrow \cdots $$
So we can say that $\bigoplus_{v\in V} e_{\rho,C(R)}^v(P_v^\bullet)$ is an $\mathcal{S}$-injective object in Frobenius category $(C(\mathcal{Q}),\mathcal{S})$, hence $f \sim g$.

Conversely, suppose that $f\sim g$. Therefore $f-g$ factors over an $\mathcal{S}$-injective object $I(\CX^\bullet)$, where $I(\CX^\bullet)$ is the complex $\oplus_{i\in Z} \overline{\CX^i}[i]$. By assumption $\CX^\bullet\in C(\mathcal{Q})_{cf}$, the full subcategory of cofibrant and fibrant objects of $C(\mathcal{Q})$, therefore for each $v\in V$, $\CX_v^\bullet\in {dgPrj-}R$ and $\eta_{\CX^\bullet,v}$ is split epimorphism. So $\CX^\bullet\in dgPrj^{op}\mbox{-}\CQ$. Hence we can say that for each $i\in Z$, $\CX^i\in {Prj}^{op}\mbox{-}\CQ $ and again in a similar manner of the proof of Theorem 4.2 in \cite{EER09}, we can say $\CX^i$ is of the form $\bigoplus_{v\in V} e_{\rho,R}^v(P^v)$ where $P^v$ is the kernel of split epimorphism of  $\eta_{\CX^i,v}$.  
So $\overline{\CX^i}$ is a direct sum of the complex as follows
$$ \cdots \rightarrow 0 \rightarrow e_{\rho,R}^v(P^v) \rightarrow e_{\rho,R}^v(P^v) \rightarrow 0 \rightarrow \cdots $$
Hence we can say that $\overline{\CX^i}=\bigoplus_{v\in V} e_{\rho,C(R)}^v(\overline{P^v})$. Now it is straightforward to check that $I(\CX^\bullet)$ satisfy in two conditions above, so $f\sim_{cw} g$. 
\end{proof}
\begin{theorem}
\label{theorem 43}
Let $\mathcal{Q}$ be an acyclic finite quiver. Then we have the following equivalence
$$K(dgPrj^{op}\mbox{-}\CQ)\cong D(\mathcal{Q})$$
\end{theorem}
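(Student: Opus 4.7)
The plan is to apply the fundamental theorem of model categories (Theorem~\ref{theorem 42}) to the componentwise projective model structure on $\C(\CQ)$ discussed at the start of this section. Since the weak equivalences in that structure are the quasi-isomorphisms, the associated homotopy category is precisely $\D(\CQ)$. Theorem~\ref{theorem 42}(1) then yields an equivalence $\mathfrak{C}_{cf}/\!\sim\;\cong\;\D(\CQ)$, where $\mathfrak{C}_{cf}$ denotes the full subcategory of bifibrant (cofibrant and fibrant) objects of $\C(\CQ)$ and $\sim$ is the model-theoretic homotopy relation.

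Next, I would read off the classes $\mathfrak{C}_{cf}$ and $\sim$ in concrete terms. By Corollary~\ref{corollary 38}, the cofibrant objects are exactly $(\CQ,\dgPrj R)$, i.e.\ complexes with $\CX^\bullet_v\in\dgPrj R$ at each vertex $v$. Applying Lemma~\ref{lemma 32} to the cotorsion pair $((\CQ,\Prj\C(R)),(\CQ,\Prj\C(R))^\perp)$ and using that $(\Prj\C(R))^\perp=\C(R)$ (since projective complexes are projective objects), the fibrant objects are precisely those $\CX^\bullet$ for which $\eta_{\CX^\bullet,v}$ is an epimorphism at every $v$. Thus a bifibrant object is exactly a $\CX^\bullet$ whose vertex complexes are DG-projective and whose canonical maps $\eta_{\CX^\bullet,v}$ are epimorphisms. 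The crucial identification is $\mathfrak{C}_{cf}=\ddgPrj^{op}\mbox{-}\CQ$: the inclusion $\ddgPrj^{op}\mbox{-}\CQ\subseteq \mathfrak{C}_{cf}$ is immediate from the definitions, while for the reverse containment one argues that the short exact sequence
\[
0\lrt \Ker(\eta_{\CX^\bullet,v})\lrt \CX^\bullet_v\st{\eta_{\CX^\bullet,v}}{\lrt} \bigoplus_{s(a)=v}\CX^\bullet_{t(a)}\lrt 0
\]
lies entirely inside $\dgPrj R$ (using that $\dgPrj R$ is closed under kernels of epimorphisms), and then, inducting along the acyclic order on the vertices of $\CQ$, builds a chain-level splitting by invoking the structure theorem in the style of \cite[Theorem~4.2]{EER09} to reorganise $\CX^\bullet$ as a direct sum of the shifted coinduced representations $e^v_{\rho,\C(R)}(P^\bullet_v)$.

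Lemma~\ref{lemma 41} now shows that on $\mathfrak{C}_{cf}$ the model-theoretic homotopy relation $\sim_{\cw}$ coincides with the classical chain-homotopy relation, so the quotient $\mathfrak{C}_{cf}/\!\sim$ is nothing but the classical homotopy category $\K(\mathfrak{C}_{cf})=\K(\ddgPrj^{op}\mbox{-}\CQ)$. Chaining the equivalences,
\[
\D(\CQ)\;\cong\;\mathfrak{C}_{cf}/\!\sim\;=\;\K(\mathfrak{C}_{cf})\;=\;\K(\ddgPrj^{op}\mbox{-}\CQ),
\]
yields the theorem. The main obstacle is the identification $\mathfrak{C}_{cf}=\ddgPrj^{op}\mbox{-}\CQ$: upgrading the bare epimorphism condition on $\eta_{\CX^\bullet,v}$ to an honest split epimorphism of chain complexes is where the geometry of the acyclic quiver has to be used, since a degreewise-split short exact sequence of DG-projective complexes does not automatically split as chain complexes without some additional structural input.
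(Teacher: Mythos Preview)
Your overall strategy is exactly the paper's: apply Theorem~\ref{theorem 42}(1) to the componentwise projective model structure on $\C(\CQ)$, identify $\C(\CQ)_{cf}$ with $\ddgPrj^{op}\mbox{-}\CQ$, and then invoke Lemma~\ref{lemma 41} to replace the model-theoretic homotopy relation $\sim_{\cw}$ by ordinary chain homotopy. The paper's proof is terse---it simply writes ``As we see above $\C(\mathcal{Q})_{cf}=\ddgPrj^{op}\mbox{-}\CQ$'' and then chains the equivalences---while you correctly isolate this identification as the one substantive step.

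Where your proposal diverges is in how hard you make that step. You read ``split epimorphism'' in the definition of $\ddgPrj^{op}\mbox{-}\CQ$ as a splitting in $\C(R)$ and then try to manufacture a chain-level section by induction along the quiver and the structure theorem of \cite{EER09}. That programme cannot succeed: over a field $k$ with $\CQ=A_2$, the representation $\bigl((k\xrightarrow{1}k)\twoheadrightarrow k[0]\bigr)$ is bifibrant, yet the surjection admits no section in $\C(k)$ (any chain map $k[0]\to (k\xrightarrow{1}k)$ is zero), and indeed no object with a chain-level split $\eta$ can be quasi-isomorphic to the simple representation $(0\to k)$. So under the chain-level reading the identification $\C(\CQ)_{cf}=\ddgPrj^{op}\mbox{-}\CQ$ is simply false.

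The paper's intended meaning is \emph{degreewise} split: this is how the condition is used in the second half of the proof of Lemma~\ref{lemma 41}, where one passes to each degree $\CX^i\in\Prjj^{op}\mbox{-}\CQ$ and decomposes it via \cite[Theorem~4.2]{EER09}. With that reading the identification is immediate---if $\CX^\bullet$ is cofibrant then every $\CX^\bullet_{t(a)}$ is degreewise projective, so any epimorphism onto $\bigoplus_{s(a)=v}\CX^\bullet_{t(a)}$ is automatically degreewise split---and no induction on the quiver is needed. In short, your proof is the paper's proof; the obstacle you flag dissolves once the definition is read as the paper uses it, and is genuinely insurmountable otherwise.
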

\begin{proof}
Consider the componentwise projective model structure on $C(\mathcal{Q})$. As we see above $C(\mathcal{Q})_{cf}=dgPrj^{op}\mbox{-}\CQ $. By lemma \ref{lemma 41} we can say that:
$$ C(\mathcal{Q})_{cf}/\sim_{cw}= dgPrj^{op}\mbox{-}\CQ/\sim .$$
But we know that $dgPrj^{op}\mbox{-}\CQ/\sim= K(dgPrj^{op}\mbox{-}\CQ)$. On the other hand By \cite[Theorem 1.2.10]{Hov99} part (i) $C(\mathcal{Q})_{cf}/\sim_{cw}\cong D(\mathcal{Q})$.
So we are done.
\end{proof}
\begin{remark}
Note that in theorem above we introduce a subcategory, differ from subcategory of DG-projective complexes of $K(\mathcal{Q})$ such that equivalent to $D(\mathcal{Q})$ under the canonical functor $K(\mathcal{Q}) \longrightarrow D(\mathcal{Q})$.
\end{remark}
At the end of this section we will introduce another interpretation of derived category of complexes of representations of quivers  in terms of componentwise notion. The following basic result is folklore. We
provide here the argument for the reader's convenience.

\begin{proposition}
\label{prop 46} 
Let $\mathcal{A}$ be an abelian category with enough projectives. Let $(\CX,\CY)$ be a complete cotorsion pair in $C(\mathcal{A})$ such that every DG-projective complex belongs to $\CX$, equivalently, every object in $\CY$ is acyclic.
Then there is a triangulated equivalence
$$ K(\CX)/K_{\ac}(\CX)\cong D(\mathcal{A})$$
where $K(\CX)$ (resp. $K_{\ac}(\CX)$) is the full subcategory of $K(\mathcal{A})$  whose objects are (resp. acyclic) complexes those from $\CX$.
\end{proposition}
\begin{proof}
First we apply \cite[Lemma 4.7.1]{K} with $\mathcal{T}=K(\mathcal{A})$, $\mathcal{T}'=K(\CX)$, $\mathcal{S}=K_{ac}(\mathcal{A})$ and $\mathcal{S}'=\mathcal{S}\cap \mathcal{T}'=K_{\ac}(\CX)$ to see that the canonical triangulated functor
$$ J:K(\CX)/K_{\ac}(\CX)=\mathcal{T}'/\mathcal{S}'\longrightarrow \mathcal{T}/\mathcal{S}=K(\mathcal{A})/K_{ac}(\mathcal{A})=D(\mathcal{A}) $$
is fully faithful. Indeed, Let $X \rightarrow A$ be a morphism of complexes where $X$ is in $\CX$
and $A$ is acyclic. As $(\CX,\CY)$ is complete, there is an exact sequence of complexes $0 \rightarrow Y' \rightarrow X' \rightarrow A \rightarrow 0$ with $X' \in \CX$ and $Y'\in \CY$. As $A$ and $Y'$ are acyclic (by assumption on $\CY$), so is $X'$ and hence $X' \in K_{\ac}(\CX)$. As $X \rightarrow A$ factors through $X'$ (even in $C(\mathcal{A})$), it follows from condition (2) in \cite[Lemma 4.7.1]{K} that $J$ is fully faithful. It remains to see that $J$ is essentially surjective. Given
any complex $M$ take exact sequence of complexes $0 \rightarrow Y \rightarrow X \rightarrow M \rightarrow 0$
with $X\in \CX$ and $Y\in \CY$. Since $Y$ is, in particular, acyclic, the morphism
$X \rightarrow M$ is an isomorphism in $D(\mathcal{A})$, so $X \in K(\CX)$ with $J(X)\simeq
M$ in $D(\mathcal{A})$.
\end{proof}

\section*{Acknowledgments}

I would like to thank Rasool Hafezi for many useful hints and comments that improved the
exposition.

\end{document}